\newtheorem{thm}{Theorem}[section]
\newtheorem{prop}[thm]{Proposition}
\newtheorem{lem}[thm]{Lemma}
\newtheorem{Def}[thm]{Definition}
\newtheorem{ex}{Example}[section]
\newcommand{\be}{\begin{equation}}
\newcommand{\ee}{\end{equation}}
\newcommand{\ben}{\begin{enumerate}}
\newcommand{\een}{\end{enumerate}}
\newcommand{\beq}{\begin{eqnarray}}
\newcommand{\eeq}{\end{eqnarray}}
\newcommand{\beqn}{\begin{eqnarray*}}
\newcommand{\eeqn}{\end{eqnarray*}}
\title[Generalized weakly-Weyl Finsler metrics]{Generalized Weakly-Weyl Finsler Metrics\\ A Generalized Approach to Sakaguchi's Theorem}
\author{Nasrin Sadeghzadeh$^*$}
\newcommand{\acr}{\newline\indent}
\address{\llap{*\,}Department of Mathematics,\acr
University of Qom, \acr
Alghadir Bld\acr
Qom\acr
Iran}
\email{nsadeghzadeh@qom.ac.ir}
\author{Meshkat Yavari}
\address{\llap{b\,}Department of Mathematics,\acr
University of Qom, \acr
Alghadir Bld\acr
Qom\acr
Iran}
\email{meshkat.yavari@stu.qom.ac.ir}
\begin{document}
\maketitle
\begin{center}
\textit{This is the accepted version of the paper published in \textbf{Differential Geometry and its Applications}, Vol.~101 (2025), 102297.}
\end{center}
\section{Abstract}
{The development of projective invariant Weyl metrics in this paper offers a fresh perspective, as we establish the characteristics of both weakly-Weyl and generalized weakly-Weyl Finsler metrics. We thoroughly examine the connections between these metrics and various projective invariants, highlighting their significance in the context of generalized Sakaguchi's Theorem, which states that every Finsler metric of scalar flag curvature is a GDW-metric. Additionally, we introduce several illustrative examples pertaining to this new class of projective invariant Finsler metrics.
Specifically, we explore the category of weakly-Weyl spherically symmetric Finsler metrics in $\mathbb{R}^n$. Importantly, we demonstrate that the two
classes weakly-Weyl and $W$-quadratic spherically symmetric Finsler metrics in $\mathbb{R}^n$ are equivalent.}
\\
\\
\subjclass \textbf{MSC [2020]}: {53B40; 53C60}\\
\keywords \textbf{Keywords}:{Weyl metrics, Weakly-Weyl metrics, Generalized weakly-Weyl metrics, Sakaguch's Theorem.}
\maketitle
\section{Introduction}
The birth of a new projective invariant in any space marks a significant advancement in the field of geometry.  Hermann Weyls pioneering work in the 1920s laid the foundation for the concept of Weyl curvature\cite{HermanWeyl}, a projective invariant tailored for Riemannian spaces, significantly influencing the advancement of Finsler geometry, a discipline with roots in the early 20th century \cite{PaulFinsler}.

In the 1920s, J. Douglas extended Weyl curvature to Finsler metrics, introducing the concept of Weyl and Douglas metrics. Weyl Finsler spaces are a special class of Finsler spaces that are characterized by the vanishing of the projective Weyl curvature.

The introduction of Weyl Finsler spaces has opened up new avenues for research and exploration in the field, making them a valuable tool for understanding geometric structures and their applications in various scientific disciplines.

The investigation of Weyl curvature in Finsler geometry persisted, leading Z. Szab\'{o} to confirm in the 1970s that Weyl metrics are precisely Finsler metrics with scalar flag curvature \cite{Weyl=Scalar}.
According to \cite{Sh2}, the Weyl curvature and Douglas curvature are considered among the fundamental projective invariants in Finsler geometry. This means that If a Finsler metric meets the criteria for Weyl curvature or Douglas curvature disappearing, then any Finsler metric that is projectively equivalent to it will also meet these criteria. Other notable projective invariants include the generalized Douglas-Weyl metrics and the class of Finsler metrics satisfying a certain equation involving the Weyl and Douglas metrics \cite{GDW}, \cite{SadeDouglas}. \\

The generalization of important geometric quantities and their applications is a remarkable phenomenon. In the field of projective invariant quantities, a notable example of this generalization is the class of $GDW$-metrics, which encompasses the important class of projective invariant Finsler metrics, including Weyl metrics. This raises a natural and important question: are there any other generalizations of Weyl structures that have a significant relationship with other projective invariant Finsler metrics? In other words, is there a broader framework that encompasses Weyl metrics or other projective invariant Finsler metrics, and if so, what are its implications for our understanding of geometric structures and their applications?

In this paper, we introduce a new perspective by developing projective invariant Weyl metrics. We define the attributes of both weakly-Weyl and generalized weakly-Weyl Finsler metrics, presenting a comprehensive approach to analyzing their relations with various types of other projective invariants. In particular, we establish the following theorems.
\begin{thm}\label{W-WeylProjInv}
Weakly-Weyl curvature is a projective invariant quantity in Finsler spaces.
\end{thm}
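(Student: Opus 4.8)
The plan is to follow the classical route for verifying projective invariance: read off how the geodesic spray changes under a projective change, propagate this through the curvature quantities that enter the definition of the weakly-Weyl curvature (write $\mathbf{WW}^i_{\ k}$ for the latter), and check that the defining combination is untouched. Recall that two Finsler structures $F$ and $\bar F$ are projectively related exactly when their spray coefficients satisfy $\bar G^i = G^i + P\,y^i$ for some scalar $P=P(x,y)$ positively homogeneous of degree one in $y$; hence it suffices to prove that $\mathbf{WW}^i_{\ k}$ is insensitive to the substitution $G^i \mapsto G^i + P y^i$. First I would record the induced transformation of the Riemann curvature, $\bar R^i_{\ k} = R^i_{\ k} + \Xi\,\delta^i_k + \theta_k\,y^i$, where $\Xi$ and $\theta_k$ are explicit in $P$ and its horizontal and vertical derivatives, the structural point being that every $P$-dependent contribution carries a factor $\delta^i_k$ or a factor $y^i$; tracing this gives the transformation of the Ricci scalar and of every trace formed from $R^i_{\ k}$. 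Equivalently, since the projective Weyl curvature $W^i_{\ k}$ is itself a projective invariant (as recalled in the introduction), one may instead take $W^i_{\ k}$ as the invariant building block and only track the auxiliary operations applied to it in the definition of $\mathbf{WW}^i_{\ k}$.

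Next I would substitute these transformation laws into the defining formula of $\mathbf{WW}^i_{\ k}$ and collect terms. Being a Weyl-type object, it is designed to be precisely that combination of the curvature, its traces and its vertical derivatives in which the ``$\delta^i_k$-part'' and the ``$y^i$-part'' have been subtracted off (mirroring how $W^i_{\ k}$ is manufactured from $R^i_{\ k}$); so the $\Xi$ and $\theta_k$ terms --- or, in the alternative route, the residual $P$-corrections --- should cancel identically, leaving $\overline{\mathbf{WW}}{}^i_{\ k}=\mathbf{WW}^i_{\ k}$. The degree-one homogeneity of $P$, Euler's relation $y^m\,\partial_{y^m}P=P$, and the structural identities of the Weyl curvature (for instance $W^i_{\ k}y^k=0$) are the tools I would use to force the cancellation.

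The main obstacle will be any covariant differentiation occurring in the definition of $\mathbf{WW}^i_{\ k}$. Although $W^i_{\ k}$ itself is projectively invariant, the Berwald (or Chern) $h$-covariant derivative is not: under $\bar G^i = G^i + P y^i$ the connection coefficients change by terms in $P$, $P_{x^k}$ and $P_{y^k}$, so a differentiated weakly-Weyl curvature acquires extra $P$-terms even when its un-differentiated constituents are invariant. The heart of the argument is to show that these residual terms drop out after the differentiations and contractions prescribed by the definition --- because they are radial, of the form $(\,\cdot\,)\,y^i$, and the defining combination annihilates such terms --- which again comes down, after a lengthy but routine computation, to the homogeneity of $P$ and the algebraic symmetries of $R^i_{\ k}$ and $W^i_{\ k}$. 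Once this cancellation is in place, $\overline{\mathbf{WW}}=\mathbf{WW}$, which is the assertion of the theorem.
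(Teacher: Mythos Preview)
Your plan misfires on two related points.

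First, you have misidentified the object. The weakly-Weyl curvature is the four-index tensor
\[
\tilde W_j{}^i{}_{kl}\;=\;W_j{}^i{}_{pl.k}\,y^p,\qquad
W_j{}^i{}_{pl}=\tfrac{1}{3}\bigl(W^i{}_{p.l}-W^i{}_{l.p}\bigr)_{.j},
\]
built from the projective Weyl curvature $W^i{}_k$ by \emph{vertical} derivatives and a contraction with $y$ only---there are no horizontal covariant derivatives in its definition, and no additional ``trace removal'' step of the kind you sketch. Since $W^i{}_k$ is already projectively invariant and $\partial/\partial y^m$ commutes with a projective change, the invariance $\overline{\tilde W}_j{}^i{}_{kl}=\tilde W_j{}^i{}_{kl}$ is immediate; the elaborate cancellation argument you outline is unnecessary for this part, and your worry about $h$-derivatives inside the definition is misplaced.

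Second---and this is the real gap---the paper's proof is not about the bare tensor $\tilde W$ at all. What is actually shown is that the \emph{weakly-Weyl metric condition} of Definition~\ref{DefW-WeylMetric} is preserved under projective change: if $\tilde W_j{}^i{}_{kl}=\omega_{jkl}\,y^i$ with $\omega_{jkl|0}+\mu F\,\omega_{jkl}=0$, then the same form holds for $\bar F$ with a new $\bar\mu$. The substantive step is controlling how the \emph{horizontal} derivative $\omega_{jkl|0}$ transforms under $\bar G^i=G^i+Py^i$; using the homogeneity $\omega_{jkl.r}y^r=-\omega_{jkl}$ and $\omega_{jkl}y^j=\omega_{jkl}y^k=\omega_{jkl}y^l=0$ one finds $\omega_{jkl||0}=\omega_{jkl|0}-P\,\omega_{jkl}$, from which the closure of the class follows. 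Your proposal never engages with the auxiliary condition on $\omega_{jkl}$, so even if your tensor-level argument were carried out correctly it would not establish what the paper proves here.
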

Additionally, we address the class of generalized weakly-Weyl Finsler metrics. Theorem \ref{W-WeylProjInv} can be understood as a direct consequence of Theorem \ref{GW-WeylProjInv} below. Specifically, since the class of weakly-Weyl Finsler metrics is a subclass of the generalized weakly-Weyl Finsler metrics, the projective invariance of the weakly-Weyl curvature follows naturally from the closure property of the broader class under projective changes. This logical dependency highlights that the invariance result presented in Theorem \ref{W-WeylProjInv} is subsumed by the more general statement on the stability of the entire generalized family in Theorem \ref{GW-WeylProjInv}.

\begin{thm}\label{GW-WeylProjInv}
The class of generalized weakly-Weyl Finsler metrics is closed under projective changes.
\end{thm}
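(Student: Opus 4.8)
The plan is to adapt the classical argument that the class of generalized Douglas--Weyl metrics is preserved under projective changes, replacing the Douglas curvature throughout by the weakly-Weyl curvature. Recall that a Finsler metric is generalized weakly-Weyl precisely when its weakly-Weyl curvature $\mathbf{WW}$ is recurrent along geodesics modulo a $y$-term, i.e. $\mathbf{WW}^i{}_{\cdots\,|\,m}\,y^m = \tau_{\cdots}\,y^i$ for some tensor $\tau = \tau(x,y)$ of the appropriate rank, where ``$|$'' denotes the $h$-covariant derivative of the Berwald connection of $G^i$ and ``$\cdots$'' abbreviates the remaining slots of $\mathbf{WW}$. By Theorem~\ref{W-WeylProjInv} the tensor $\mathbf{WW}$ is a projective invariant, so under a projective change $\bar G^i = G^i + P\,y^i$ --- with $P = P(x,y)$ positively homogeneous of degree one in $y$ --- we have $\overline{\mathbf{WW}} = \mathbf{WW}$; only the connection used to differentiate it changes. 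Writing ``$;$'' for the $h$-covariant derivative of the Berwald connection of $\bar G^i$, the problem therefore reduces to computing $\mathbf{WW}^i{}_{\cdots\,;\,m}\,y^m - \mathbf{WW}^i{}_{\cdots\,|\,m}\,y^m$ and checking that it is again of the form $(\text{tensor})\,y^i$.

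For the core computation I would invoke the standard transformation of the Berwald connection coefficients under a projective change: writing $P_j := \partial P/\partial y^j$, one has $\bar G^i_j = G^i_j + P_j\,y^i + P\,\delta^i_j$ and, on contraction with $y$, $\bar G^i_{rm}\,y^m = G^i_r + P_r\,y^i + P\,\delta^i_r$, while $2\bar G^i = 2G^i + 2P\,y^i$. Substituting these into the coordinate expression of $\mathbf{WW}^i{}_{\cdots\,;\,m}\,y^m$ --- that is, the transverse operator $y^m\partial_{x^m} - 2\bar G^r\partial_{y^r}$ applied to $\mathbf{WW}$, together with one $\bar G^i_{rm}y^m$ term for each contravariant slot and minus one $\bar G^r_{(\cdot)m}y^m$ term for each covariant slot --- and using $\overline{\mathbf{WW}} = \mathbf{WW}$, the $G$-parts recombine into $\mathbf{WW}^i{}_{\cdots\,|\,m}\,y^m$ and what remains is a correction built only from $P$ and $P_j$ contracted against $\mathbf{WW}$. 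This correction splits into two groups: (i) a scalar multiple $c\,P\,\mathbf{WW}$, where $c = -2(\deg\mathbf{WW}) + p - q$ with $p,q$ the numbers of contravariant and covariant slots of $\mathbf{WW}$ --- the first term coming from $-2P\,y^r\partial_{y^r}\mathbf{WW}$ by Euler's relation, the second from the $\delta^i_r$'s and the third from the $\delta^r_{(\cdot)}$'s; and (ii) terms $P_r\,y^i\,\mathbf{WW}$ with $r$ occupying a contravariant slot, together with terms $-P_{(\cdot)}\,y^r\,\mathbf{WW}$ in which $y^r$ has been inserted into a covariant slot.

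To finish, the second kind of terms in group (ii) vanish, because contracting $y$ into a covariant slot of the weakly-Weyl curvature gives zero --- an algebraic identity satisfied by $\mathbf{WW}$ and inherited from the corresponding identities of the Weyl curvature --- while the first kind already carries an explicit factor $y^i$ and is absorbed into a redefined $\bar\tau$. The delicate point is group (i): since a non-zero multiple of $P\,\mathbf{WW}$ is \emph{not} of the form $(\text{tensor})\,y^i$, the whole argument hinges on the identity $c = 0$, and establishing $c = 0$ --- i.e. $\deg\mathbf{WW} = (p-q)/2$ --- is the step I expect to be the main obstacle. This is precisely the numerical balance that makes the Douglas tensor generate a projectively stable ``generalized'' class (there $\deg\mathbf{D} = -1$, $p = 1$, $q = 3$), and one must verify that the weakly-Weyl curvature has a weight and rank in the same balance; were the cancellation to fail, one would have to remove the residual $P\,\mathbf{WW}$ by a further trace identity of $\mathbf{WW}$ or re-absorb it into the defining equation of the class. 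Granting $c = 0$, one gets $\mathbf{WW}^i{}_{\cdots\,;\,m}\,y^m = \bar\tau_{\cdots}\,y^i$ with $\bar\tau = \tau + (\text{terms linear in } P \text{ and } \partial P \text{ contracted with } \mathbf{WW})$, so $\bar F$ is again generalized weakly-Weyl; as $P$ was arbitrary, the class is closed under projective changes.
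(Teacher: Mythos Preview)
Your transformation computation is essentially the paper's, and your claim that contracting $y$ into any covariant slot of $\tilde W$ gives zero is correct. The gap is in your opening premise: the paper does \emph{not} define ``generalized weakly-Weyl'' by $\tilde W_j{}^i{}_{kl|0}=\tau_{jkl}\,y^i$. Its definition is
\[
\tilde W_j{}^i{}_{kl|0}+\mu F\,\tilde W_j{}^i{}_{kl}=\lambda_r\,\tilde W_j{}^r{}_{kl}\,y^i
\]
for some scalar $\mu$ and covector $\lambda_r$ on $TM$. This matters precisely at the step you flagged as the main obstacle: for $\tilde W$ one has $\deg\tilde W=0$, $p=1$, $q=3$, hence your coefficient is $c=-2\cdot 0+1-3=-2\neq 0$, and the residual $-2P\,\tilde W_j{}^i{}_{kl}$ really is present. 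With the GDW-style condition you assumed, the class would \emph{not} be projectively closed.

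What saves the theorem is exactly the fallback you mention in passing but do not carry out: the extra $\mu F\,\tilde W$ term in the actual definition absorbs that residual. From
\[
\tilde W_j{}^i{}_{kl||0}=\tilde W_j{}^i{}_{kl|0}-2P\,\tilde W_j{}^i{}_{kl}+P_{.r}\,\tilde W_j{}^r{}_{kl}\,y^i,
\]
substituting $\tilde W_j{}^i{}_{kl|0}=-\mu F\,\tilde W_j{}^i{}_{kl}+\lambda_r\tilde W_j{}^r{}_{kl}y^i$ yields
$\tilde W_j{}^i{}_{kl||0}=-\bar\mu\bar F\,\tilde W_j{}^i{}_{kl}+\bar\lambda_r\tilde W_j{}^r{}_{kl}y^i$
with $\bar\mu\bar F=\mu F+2P$ and $\bar\lambda_r=\lambda_r+P_{.r}$. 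Note also that in the paper's definition the right-hand side is not an arbitrary tensor times $y^i$ but specifically $\lambda_r\tilde W_j{}^r{}_{kl}\,y^i$; the single surviving $P_{.r}\,y^i$ term from your group~(ii) has exactly this shape, which is what makes the updated $\bar\lambda_r$ well defined. So your outline becomes a valid proof once you replace the assumed definition by the paper's and drop the attempt to force $c=0$.
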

The method of establishing generalized weakly-Weyl metrics is somewhat similar to the technique utilized in the study on \cite{SadeDouglas}, which introduced relatively isotropic $\tilde{D}$ metrics.
Our framework provides a unified approach to the study of projective invariant Finsler metrics, including Weyl metrics and other notable examples. This research leads to a different and more straightforward proof of Sakaguchi's Theorem, along with its generalization. We first prove the following Theorem.
\begin{thm} \label{WeylDouglasTheta}\label{PreSakaguchi}
For every Finsler metric $F$, with Weyl curvature $W=\{W_y\}_{y \in T_xM\setminus{0}}$ and Douglas curvature $D=\{D_y\}_{y \in T_xM\setminus{0}}$, it follows that
\be\label{WeylDotheta}
W_j{^i}_{ml.k}y^m=D_j{^i}_{kl|0} -\frac{1}{n+1} \theta_{jkl} y^i,
\ee
where $\theta_{jkl}=2E_{jk|l}-\frac{1}{3}(R{^s}_{l.s}-(n+2)R_{.l})_{.j.k}$.
\end{thm}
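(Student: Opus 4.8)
The plan is to verify \eqref{WeylDotheta} as a pointwise identity by expanding both sides in terms of the standard projective data attached to the spray of $F$: the Riemann curvature $R^i{}_k$ (with Ricci scalar $\mathrm{Ric}=R^m{}_m$), the Berwald curvature $B^i{}_{jkl}=\pa^3 G^i/\pa y^j\pa y^k\pa y^l$, the mean Berwald curvature $E_{jk}=\tfrac12 B^m{}_{jkm}$, and their vertical ($\cdot$) and horizontal ($|$) derivatives. Recall that the Douglas curvature can be written $D_j{}^i{}_{kl}=B^i{}_{jkl}-\tfrac{2}{n+1}\bigl(E_{jk}\delta^i_l+E_{jl}\delta^i_k+E_{kl}\delta^i_j+E_{jk.l}\,y^i\bigr)$, and that the Weyl curvature tensor $W_j{}^i{}_{kl}$ is obtained from the Weyl operator $W^i{}_k$ by the same vertical-differentiation procedure that produces the Riemann curvature tensor $R_j{}^i{}_{kl}$ from $R^i{}_k$, where $W^i{}_k$ equals $R^i{}_k$ minus its projectively invariant trace part, built out of $\mathrm{Ric}$ and the vertical derivatives of $\mathrm{Ric}$.

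First I would record the Bianchi-type identity that links the Berwald and Riemann curvatures, namely the formula expressing the spray-horizontal derivative $B^i{}_{jkl|0}$ (that is, $B^i{}_{jkl|m}y^m$) through vertical derivatives of $R^i{}_k$ up to lower-order Berwald terms; it follows by differentiating the defining formula for $R^i{}_k$ and invoking homogeneity ($R^i{}_k y^k=0$ and Euler's theorem). Contracting this identity over the pair $(i,l)$ and tracing then supplies the companion relations for $E_{jk|0}$, for $\mathrm{Ric}$, and for the contractions $R^s{}_{l.s}$ and $R_{.l}$ that appear in $\theta_{jkl}$.

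Next I would compute the two sides of \eqref{WeylDotheta}. On the left, applying $\pa/\pa y^k$ to $W_j{}^i{}_{ml}$ and contracting with $y^m$ collapses — by Euler's theorem, by $R^i{}_k y^k=0$, and by the analogous relation for $W^i{}_k$ — to the $y^m$-contracted vertical derivatives of the Weyl operator, hence to vertical derivatives of $R^i{}_k$ plus trace terms carrying the factor $\tfrac1{n+1}$. On the right, since $\delta^i_l$ and $y^i$ are parallel along the spray, $D_j{}^i{}_{kl|0}$ equals $B^i{}_{jkl|0}$ minus $\tfrac2{n+1}$ times the spray-horizontal derivatives of the four $E$-terms. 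Substituting the Bianchi-type identity for $B^i{}_{jkl|0}$ makes the ``full'' Riemann parts on the two sides cancel, leaving a purely trace, $y^i$-proportional expression; collecting the residual traces — the $E_{jk|l}$ contribution from the mean-Berwald part together with the $\mathrm{Ric}$, $R^s{}_{l.s}$, $R_{.l}$ contributions from the trace part of $W^i{}_k$ — and reconciling the dimension-dependent factors $\tfrac1{n+1}$, $\tfrac1{n-1}$ and $(n+2)$ should reproduce exactly $-\tfrac1{n+1}\theta_{jkl}y^i$.

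The main obstacle will be the bookkeeping of the trace terms, and above all the non-commutativity of horizontal and vertical covariant differentiation: moving between $B^i{}_{jkl|0}$ and the vertical derivatives of $R^i{}_k$, and between $E_{jk|l}$ and the $E_{jk|0}$-type quantities, repeatedly forces one to apply the commutation (Ricci) identities of the Berwald connection, each of which injects extra curvature terms that must cancel among themselves for the right-hand side to assume the stated form. A second delicate point will be checking that it is precisely the combination $R^s{}_{l.s}-(n+2)R_{.l}$, with the coefficient $-\tfrac13$, that survives after the two vertical derivatives and the $y^m$-contraction are applied to the trace part of $W^i{}_k$; this comes down to tracking which contractions of $\mathrm{Ric}$ and its vertical derivatives persist and which reconstitute $E$-terms. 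Once these two points are under control, assembling the pieces yields \eqref{WeylDotheta}.
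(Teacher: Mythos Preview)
Your plan is correct and is essentially the paper's own argument: the key inputs are the Ricci/Bianchi identity $B_j{}^i{}_{kl|0}=R_j{}^i{}_{ml.k}y^m$ (and its trace $R_s{}^s{}_{ml.k}=2(E_{kl|m}-E_{km|l})$), the expansion of $W^i{}_k$ as $R^i{}_k$ minus its trace part, and the Douglas formula \eqref{D2}; the paper organizes the computation by first writing $R_j{}^i{}_{ml}$ in terms of $W_j{}^i{}_{ml}$ plus trace terms and then differentiating and contracting, rather than computing the two sides separately, but the bookkeeping is identical. One remark: you slightly overestimate the role of horizontal/vertical commutation---after contracting with $y^m$ the only place it enters is the harmless $E_{jl|p.k}y^p=E_{jl.k|0}$, so the ``main obstacle'' you anticipate is milder than advertised.
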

The subsequent theorem is a straightforward consequence of the previous one.
\begin{thm}\label{GSakaguchi}(\textbf{Generalized Sakaguchi Theorem})\\
Every weakly-Weyl Finsler metric is a $GDW$-metric.
\end{thm}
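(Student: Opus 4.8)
The plan is to read off Theorem~\ref{GSakaguchi} directly from Theorem~\ref{WeylDouglasTheta}: once the relevant definitions are in place, the argument is essentially a one-line substitution followed by a short homogeneity check. Recall that, by definition, a Finsler metric $F$ is weakly-Weyl precisely when its weakly-Weyl curvature $W_j{}^i{}_{ml.k}\,y^m$ vanishes, and that $F$ is a $GDW$-metric precisely when its Douglas curvature satisfies $D_j{}^i{}_{kl|0}=T_{jkl}\,y^i$ for some tensor $T_{jkl}=T_{jkl}(x,y)$ positively homogeneous of the appropriate degree in $y$, where $D_j{}^i{}_{kl|0}:=D_j{}^i{}_{kl|m}y^m$ denotes the $h$-covariant derivative of the Douglas curvature along $y$. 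Thus the whole proof reduces to inserting the weakly-Weyl hypothesis into identity~\eqref{WeylDotheta} and recognising the right-hand side as the defining relation of a $GDW$-metric.

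First I would specialise \eqref{WeylDotheta} to a weakly-Weyl metric $F$. Since $W_j{}^i{}_{ml.k}y^m\equiv 0$, the left-hand side of \eqref{WeylDotheta} vanishes identically, and Theorem~\ref{WeylDouglasTheta} gives at once
\be\label{pp:gdw}
D_j{}^i{}_{kl|0}=\frac{1}{n+1}\,\theta_{jkl}\,y^i,
\ee
with $\theta_{jkl}=2E_{jk|l}-\tfrac{1}{3}(R^s{}_{l.s}-(n+2)R_{.l})_{.j.k}$ exactly as in the statement of Theorem~\ref{WeylDouglasTheta}.

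Second --- and this is the only point that requires any care --- I would check that $\tfrac{1}{n+1}\theta_{jkl}$ is an admissible choice of $T_{jkl}$, i.e.\ that $\theta_{jkl}$ is a genuine tensor field on $TM\setminus\{0\}$ carrying the $y$-homogeneity degree dictated by \eqref{pp:gdw}. Since the Douglas curvature $D_j{}^i{}_{kl}$ is positively homogeneous of degree $-1$ in $y$, so is its $h$-covariant derivative, and the contraction with $y^m$ in $D_j{}^i{}_{kl|0}$ raises the degree to $0$; comparing with $T_{jkl}\,y^i$ then forces $T_{jkl}$ to be positively homogeneous of degree $-1$. From the homogeneity of the mean Berwald curvature $E_{jk}$ and of the Riemann curvature $R^i{}_k$ one checks directly that $E_{jk|l}$ and $(R^s{}_{l.s}-(n+2)R_{.l})_{.j.k}$ are each positively homogeneous of degree $-1$, so $\theta_{jkl}$ is too; and that $\theta_{jkl}$ transforms tensorially is clear from its construction out of $E$, $R$ and the Finsler connection. (Any symmetry of $\theta_{jkl}$ in $j,k,l$ required by the definition is then automatic, since by \eqref{pp:gdw} the object $\theta_{jkl}y^i$ is proportional to $D_j{}^i{}_{kl|0}$, which is symmetric in $j,k,l$.) This homogeneity-and-tensoriality verification is the main, and essentially only, obstacle, and it is routine.

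Finally, taking $T_{jkl}:=\tfrac{1}{n+1}\theta_{jkl}$, relation \eqref{pp:gdw} reads $D_j{}^i{}_{kl|0}=T_{jkl}\,y^i$, which is exactly the defining property of a $GDW$-metric; hence every weakly-Weyl Finsler metric is a $GDW$-metric, as claimed. I would close with the observation that Sakaguchi's theorem follows at once as a special case: by Szab\'o's theorem a Finsler metric of scalar flag curvature is a Weyl metric, so $W=0$ and in particular $W_j{}^i{}_{ml.k}y^m=0$, i.e.\ it is weakly-Weyl, and therefore a $GDW$-metric.
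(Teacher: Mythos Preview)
Your overall strategy --- substitute the weakly-Weyl hypothesis into identity~\eqref{WeylDotheta} and read off the $GDW$ condition --- is exactly the paper's approach. The difficulty is that you have the wrong definition of ``weakly-Weyl metric''. In this paper (Definition~\ref{DefW-WeylMetric}) a Finsler metric $F$ is \emph{weakly-Weyl} when its weakly-Weyl curvature satisfies
\[
\tilde{W}_j{}^i{}_{kl}=W_j{}^i{}_{ml.k}\,y^m=\omega_{jkl}\,y^i,
\]
together with the side condition $\omega_{jkl|0}+\mu F\,\omega_{jkl}=0$ for some scalar $\mu$. The condition $\tilde{W}_j{}^i{}_{kl}\equiv 0$ that you use is strictly stronger: by Proposition~4.2 it is equivalent to $F$ being $W$-quadratic, which is only a subclass of the weakly-Weyl metrics. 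So as written your argument proves merely that every $W$-quadratic metric is $GDW$, not the full Theorem~\ref{GSakaguchi}.

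The fix is immediate and leaves your proof intact in structure. Insert $W_j{}^i{}_{ml.k}y^m=\omega_{jkl}y^i$ (rather than $0$) into~\eqref{WeylDotheta} to obtain
\[
D_j{}^i{}_{kl|0}=\Bigl(\omega_{jkl}+\tfrac{1}{n+1}\theta_{jkl}\Bigr)y^i,
\]
and take $T_{jkl}:=\omega_{jkl}+\tfrac{1}{n+1}\theta_{jkl}$. Note that the extra hypothesis $\omega_{jkl|0}+\mu F\,\omega_{jkl}=0$ is never used here; it is needed for the projective invariance of the weakly-Weyl class (Theorem~\ref{W-WeylProjInv}), not for the $GDW$ conclusion. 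Your homogeneity and tensoriality remarks carry over verbatim to $\omega_{jkl}$, since it is obtained from $\tilde{W}_j{}^i{}_{kl}$ (homogeneous of degree~$0$) by stripping off a factor of $y^i$.
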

Additionally, we explore several examples within this new class of projective invariant Finsler metrics, focusing particularly on the class of weakly-Weyl spherically symmetric Finsler metrics.
\begin{thm}\label{W-WeylSphThm}
Every spherically symmetric Finsler metrics in $\mathbb{R}^n$ is weakly-Weyl if and only if it is of $W$-quadratic.
\end{thm}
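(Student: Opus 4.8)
The plan is to pass to the explicit normal form for spherically symmetric metrics and reduce both properties to differential equations on the single defining function. Recall that a spherically symmetric Finsler metric on a domain in $\mathbb{R}^n$ can be written as $F=u\,\phi(r,s)$, where $r=|x|$, $u=|y|$, $s=\langle x,y\rangle/u$, and $\phi$ is positive and satisfies the strong-convexity inequalities. For such $F$ the geodesic spray coefficients take the form $G^i=u\,P\,y^i+u^2 Q\,x^i$ with $P=P(r,s)$ and $Q=Q(r,s)$ explicit rational expressions in $\phi$ and its $r$- and $s$-derivatives; substituting this into the Riemann curvature formula $R^i{}_k=2\,\partial_{x^k}G^i-y^j\,\partial_{x^j}\partial_{y^k}G^i+2G^j\,\partial_{y^j}\partial_{y^k}G^i-\partial_{y^j}G^i\,\partial_{y^k}G^j$ expresses $R^i{}_k$ in the natural tensor frame built from the radial vector $x$ and the direction $y$, with coefficients that are functions of $(r,s)$ constrained by $R^i{}_k y^k=0$. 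Feeding this into the definition of the Weyl curvature $W^i{}_k$ yields a tensor of the same shape, which I would record explicitly; the essential point is that, after accounting for $W^i{}_k y^k=0$ and $W^i{}_i=0$, all of the content of $W^i{}_k$ is carried by just one or two scalar functions of $(r,s)$. (If convenient, one may also first simplify $\phi$ by an $s$-linear projective change $\phi\mapsto\phi+c(r)s$, which preserves both spherical symmetry and, by Theorem~\ref{W-WeylProjInv}, the weakly-Weyl class.)

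I would then unwind the two conditions on these scalars. The $W$-quadratic condition asserts that $W^i{}_k$ is a quadratic polynomial in $y$, equivalently $\partial_{y^j}\partial_{y^l}\partial_{y^m}W^i{}_k=0$; in view of the rigid shape above this collapses to the vanishing of certain $s$-derivatives of the curvature scalars, i.e.\ to one or two ordinary differential equations in $s$ with $r$ as a parameter. The weakly-Weyl condition, expanded through the Weyl $(1,3)$-tensor $W_j{}^i{}_{kl}$ obtained from $W^i{}_k$ by vertical differentiation and contracted as in Theorem~\ref{PreSakaguchi}, reduces similarly to a system of scalar equations in $(r,s)$. One implication of the theorem --- most plausibly that a $W$-quadratic spherically symmetric metric is weakly-Weyl --- should then be essentially immediate: a quadratic $W^i{}_k$ has vanishing third (and higher) vertical derivatives, so it automatically satisfies the a priori weaker weakly-Weyl equation, and this inclusion ought in fact to hold for arbitrary Finsler metrics.

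The substantive direction is weakly-Weyl $\Rightarrow$ $W$-quadratic. Starting from the weakly-Weyl system and equating coefficients along the tensor frame above, I would extract the resulting differential relations among the curvature scalars and then argue that the rigidity inherent in the spherically symmetric ansatz --- every coefficient depends on $s$ only through $\phi$ and $s$ itself, together with the algebraic constraints from $W^i{}_k y^k=0$ --- forces the remaining $s$-derivatives to vanish, which is precisely $W$-quadraticity. I expect this to be the main obstacle: one must rule out solutions of the weakly-Weyl system whose Weyl curvature is genuinely non-quadratic, and this is likely to require a case analysis according to whether certain curvature scalars vanish identically (the degenerate branches typically singling out Randers- or Berwald-type subfamilies). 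Finally I would treat the dimension $n=2$ case separately, where $W^i{}_k\equiv 0$ always and both properties are vacuous, and combine the two implications to obtain the claimed equivalence.
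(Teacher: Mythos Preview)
Your outline is the right one and matches the paper's route: write the Weyl curvature of a spherically symmetric metric in the frame built from $x^i$, $y^i$, and $\delta^i{}_\bullet$, then compare the two conditions as scalar equations in $(r,s)$. The easy direction ($W$-quadratic $\Rightarrow$ weakly-Weyl) is indeed general, via Proposition~4.2.

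Where your plan drifts is in the hard direction. You anticipate a genuine differential system, possible non-quadratic solutions to be ruled out, and a case analysis on vanishing of curvature scalars. None of this occurs. The point you are missing is that for a spherically symmetric metric the weakly-Weyl tensor $\tilde W_j{}^i{}_{kl}=W_j{}^i{}_{pl.k}y^p$ already decomposes as a linear combination of the \emph{independent} vector parts $\delta^i{}_l$, $\delta^i{}_j$, $\delta^i{}_k$, $x^i$, $y^i$, with scalar coefficients built from a single function $R_2(r,s)$ (equivalently $\omega_2=R_2$) and its $s$-derivatives. The weakly-Weyl condition $\tilde W_j{}^i{}_{kl}=\omega_{jkl}\,y^i$ therefore forces the coefficients of $\delta^i{}_l$, $\delta^i{}_j$, $\delta^i{}_k$, and $x^i$ to vanish \emph{pointwise}; reading off, for instance, the $x^i$-coefficient gives $(\omega_2)_s=0$ directly, and the others give $X_3=0$, so all $\omega_i$ become polynomial in $s$ of the right degree and $W_j{}^i{}_{kl}$ is independent of $y$. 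That is already $W$-quadraticity; no integration, no branching, no Randers/Berwald subcases.

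Two further corrections to your plan. First, the projective normalization $\phi\mapsto\phi+c(r)s$ is unnecessary and the paper does not use it. Second, the horizontal condition $\omega_{jkl|0}+\mu F\,\omega_{jkl}=0$ in the weakly-Weyl definition plays no role here: the proportionality $\tilde W_j{}^i{}_{kl}=\omega_{jkl}y^i$ alone already forces $W$-quadraticity (hence $\omega\equiv 0$, so the horizontal condition is vacuously satisfied). Your plan implicitly treats both parts of the definition as constraints to be solved simultaneously, which is why you expect more work than is actually required.
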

The key question we explore in this paper is whether there exist other generalizations of Weyl structures that have a significant relationship with other projective invariant Finsler metrics. Our novel framework allows us to address this question by introducing a unified perspective on these important geometric structures.
\\
The symbols ${}_{.}$ and ${}_{|}$ in this article denote the vertical and horizontal derivatives with respect to the Berwald connection within the Finsler metric $F$. \\
Moreover, the subscript ${}_0$ represents the operation of contraction by $y^m$ as shown by the subscript ${}{m}$, while the symbol ${}_{;m}$ signifies differentiation in relation to $x^m$.\\
Additionally, the subscript ${}_{s}$ and ${}_{r}$ signify the derivative with respect to $s$ and $r$, respectively.
\section{Preliminaries}
On manifold $M$, a Finsler metric $F$ defined as a non-negative function on $TM$ must adhere to certain properties.
\ben
\item[(a)] $F$ is
 $C^{\infty}$ on $TM\setminus \{0\}$,
\item[(b)]
$F(\lambda y) =\lambda F(y)$, $\forall \lambda >0$, $\ y\in TM$,
\item[(c)] For each $y\in T_xM$,
the following quadratic form ${\bf g}_y$ on $T_xM$ is positive definite,
\be
{\bf g}_y(u, v):= {1\over 2} \Big [ F^2(y+ s u + tv ) \Big ]\Big |_{s, t=0}, \ \ \ \ \ \ u, v\in T_xM.
\ee
\een
The Finsler metric \( F \) on \( M \) satisfies the condition that at each point \( x \in M \), \( F_x = F|_{T_xM} \) is an Euclidean norm if and only if \( \mathbf{g}_y \) is independent of \( y \in T_xM \setminus \{0\} \).
A geodesic is defined by the following equation
\be
{d^2 c^i\over dt^2} + 2 G^i (c(t), \dot{c}(t))=0,
\ee
with local functions $G^i(x, y)$ given on $TM$ as
\be
G^i(x, y):= \frac{1}{4} g^{il}(x, y) \{ \frac{\partial^2 F^2}{\partial x^k \partial y^l} y^k - \frac{\partial F^2}{\partial x^l}\},\quad y\in T_xM, \label{Gi}
\ee
and $G^i=G^i(x, y)$ are called the spray coefficients of $F$. Denoted by
\[
G=y^i\frac{\partial }{\partial x^i}-2G^i \frac{\partial}{\partial y^i},
\]
the associated spray to \( (M, F) \) generates geodesics in \( M \), which correspond to the projections of the integral curves of \( G \). The Riemann curvature tensor \( R_y = R^{i}_{k} \frac{\partial}{\partial x^{i}} \otimes dx^{k} \) characterizes the curvature of \( F \) is given by
\[
R{^i}_k=2\frac{\partial G^i}{\partial x^k}-\frac{\partial^2 G^i}{\partial x^m \partial y^k}y^m+ 2G^m\frac{\partial^2 G^i}{\partial y^m \partial y^k}-\frac{\partial G^i}{\partial y^m}\frac{\partial G^m}{\partial y^k}.
\]
With respect to the Riemann curvature of the Finsler metric \( F \), it can be noted that \cite{Sh2}
\be\label{Rikl}
R{^i}_{kl}=\frac{1}{3}(R{^i}_{k.l}-R{^i}_{l.k}), \quad and \quad R_j{^i}_{kl}=R{^i}_{kl.j}.
\ee
Here, $"{}_{.k}"$ denotes the differential with respect to $y^k$.\\
When \( G^i(x, y) \) exhibits quadratic properties with respect to \( y \in T_xM \) for all \( x \in M \), the metric \( F \) is referred to as a Berwald metric. New, define
\[
B_y:T_xM\otimes T_xM\otimes T_xM\rightarrow T_xM
\]
\[
B_y(u,v,w)=B_j{^i}_{kl}u^j v^k w^l \frac{\partial}{\partial x^i},
\]
where
$
B_j{^i}_{kl}=\frac{\partial^3 G^i}{\partial y^j \partial y^k \partial y^l}$ and $u=u^i\frac{\partial}{\partial x^i}$, $v=v^i \frac{\partial}{\partial x^i}$, $w=w^i\frac{\partial}{\partial x^i}$.
The connection between Riemann and Berwald curvature is a topic of significant importance, as emphasized in \cite{Sh2}.
\be\label{RieBer}
B_j{^i}_{ml|k}-B_j{^i}_{mk|l}=R_j{^i}_{kl.m}.
\ee
Define
\[
E_y:T_xM\otimes T_xM \rightarrow \mathbb{R},
\]
\[
E_y(u,v)=E_{jk} u^j v^k,
\]
where $E_{jk}=\frac{1}{2}B_j{^m}_{km}$. The Berwald curvature, denoted by \( B \), and the mean Berwald curvature, represented by \( E \), are fundamental concepts in the study of Finsler metrics. When both \( B = 0 \) and \( E = 0 \), the Finsler metric \( F \) is classified as a Berwald metric, and it is also identified as a Weakly Berwald (WB) metric \cite{Sh3}. \\
Considering a Finsler metric \( (M, F) \) that exhibits isotropic mean Berwald curvature, if
\[
E_{ij}=\frac{n+1}{2}cF^{-1} h_{ij},
\]
is valid for some scalar function $c=c(x)$ on $M$, where \( h_{ij} \) represents the angular metric. The \( S \)-curvature \( S(x, y) \) is defined as follows \cite{Sh3}
\[
S(x,y)=\frac{d}{dt}[\tau(\gamma(t),\gamma'(t)]_{|t=0},
\]
where \( \tau(x, y) \) denotes the distortion of the metric \( F \), and \( \gamma(t) \) is the geodesic such that $\gamma(0)=x$ and $\gamma'(0)=y$ on $M$. It is widely accepted that \cite{Sh2}
\be\label{ES}
E_{ij}=\frac{1}{2}S_{.i.j}.
\ee
H. Akbar-Zadeh introduced the Finslerian value \( H \) to characterize Finsler metrics with constant flag curvature, which is derived from the mean Berwald curvature \( E \) through covariant horizontal differentiation along geodesics. For a vector \( y \in T_pM \), the function
\[
H_y : T_pM \times T_pM \longrightarrow R
\]
is defined as follows
\[
H_y(u, v) := H_{jk}(y)u^j v^k,
\]
where $H_{jk} := E_{jk|l}y^l$. 
Define
\be\label{Douglas}
D_j{^i}_{kl}=B_j{^i}_{kl}-\frac{1}{n+1}\frac{\partial^3}{\partial y^j \partial y^k \partial y^l}(\frac{\partial G^m}{\partial y^m}y^i).
\ee
The tensor $D:=D_j{^i}_{kl} dx^j\otimes \frac{\partial}{\partial x^i}\otimes dx^k \otimes dx^l$ is a well-defined tensor on the slit tangent bundle \( TM_0 \) and is referred to as the Douglas tensor. The Douglas tensor, denoted by \( D \), is a projective invariant that is non-Riemannian. This indicates that if two Finsler metrics \( F \) and \( \bar{F} \) are equivalent under projective transformations, meaning that
\be\label{ProjGeo}
G^i=\bar{G^i}+P y^i,
\ee
where the projective factor \( P = P(x, y) \) is positively \( y \)-homogeneous of degree one, then the Douglas tensor of \( F \) is identical to that of \( \bar{F} \) \cite{Sh2}.
It can be easily demonstrated that
\be\label{D2}
D_j{^i}_{kl}=B_j{^i}_{kl}-\frac{2}{n+1}\{E_{jk}\delta^i_l+E_{jl}\delta^i_k+E_{kl}\delta^i_j+E_{jk.l}y^i\}.
\ee
The Douglas curvature, denoted by $D_j{^i}_{kl}$, s a projective invariant constructed from the Berwald curvature. Finsler metrics for which \( D_j{^i}_{kl}= 0 \) are referred to as Douglas metrics. Moreover, metrics that satisfy the following equation are known as \( GDW \)-metrics, and they also remain invariant under projective transformations.
\[
{D_j}^i{}_{kl|m}y^m=T_{jkl}y^i,
\]
for some tensors \( T_{jkl} \), where \( D_j{^i}_{kl|m} \) denotes the horizontal derivatives of \( D_j{^i}_{kl} \) with respect to the Berwald connection of \( F \).
\begin{lem} \cite{Sh2}
Consider two Finsler metrics, $F$ and $\bar{F}$, that are projectively equivalent on the manifold $M$. Their Riemann curvatures are related by
\be\label{Rieproj}
\bar{R}{^i}_k=R{^i}_k+E\delta{^i}_k+\tau_k y^i,
\ee
where
\[
E=P^2-P_{|m}y^m, \quad \quad \tau_k=3(P_{|k}-PP_{.k})+E_{.k}.
\]
Here $P_{|k}$ denotes the covariant derivative of projective factor $P$ with respect to $\bar{F}$.\\
\end{lem}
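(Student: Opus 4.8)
The plan is to prove the relation by direct substitution of the projective identity $G^i=\bar{G}^i+Py^i$ into the explicit expression for the Riemann curvature recorded in the Preliminaries, followed by a systematic regrouping of the resulting terms by their degree in $P$. Writing $P_{;k}=\partial P/\partial x^k$ and $P_{.m}=\partial P/\partial y^m$, I would first record the elementary derivative identities for the deviation $Py^i$, namely
\[
\partial_{x^k}(Py^i)=P_{;k}y^i,\qquad \partial_{y^m}(Py^i)=P_{.m}y^i+P\delta^i_m,
\]
together with the corresponding mixed second derivatives; these follow from the product rule and $\partial y^i/\partial y^m=\delta^i_m$. Throughout I will use that $P$ is positively homogeneous of degree one, so by Euler's theorem $P_{.m}y^m=P$ and hence $P_{.m.k}y^m=0$.

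Next I would substitute $G^i=\bar{G}^i+Py^i$ and $G^m=\bar{G}^m+Py^m$ into each of the four summands of
\[
R{^i}_k=2\frac{\partial G^i}{\partial x^k}-\frac{\partial^2 G^i}{\partial x^m\partial y^k}y^m+2G^m\frac{\partial^2 G^i}{\partial y^m\partial y^k}-\frac{\partial G^i}{\partial y^m}\frac{\partial G^m}{\partial y^k},
\]
and expand. The contributions depending only on $\bar{G}^i$ reassemble exactly into $\bar{R}{^i}_k$, so the entire content of the lemma is the identification of the remaining $P$-dependent terms with $E\,\delta^i_k+\tau_k y^i$. It is convenient to separate these into a part quadratic in $P$ and a part linear in $P$.

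The quadratic part comes only from the cross-term $2G^m\,\partial^2_{y^my^k}G^i$ and from $-\partial_{y^m}G^i\,\partial_{y^k}G^m$. Expanding $2(Py^m)\,\partial^2_{y^my^k}(Py^i)$ and using $P_{.m}y^m=P$, $P_{.m.k}y^m=0$ yields $2P^2\delta^i_k+2PP_{.k}y^i$, while $-\partial_{y^m}(Py^i)\,\partial_{y^k}(Py^m)$ yields $-P^2\delta^i_k-3PP_{.k}y^i$, for a net quadratic contribution $P^2\delta^i_k-PP_{.k}y^i$. The terms linear in $P$, arising from all four summands, are then reorganized by pairing each ordinary derivative $P_{;k}$ with the connection coefficients $\bar{N}^m_k=\partial\bar{G}^m/\partial y^k$ of $\bar{F}$ to form the horizontal covariant derivative $P_{|k}=P_{;k}-\bar{N}^m_k P_{.m}$; these collapse into $-(P_{|m}y^m)\delta^i_k$ on the $\delta^i_k$-side and into $3P_{|k}y^i-(P_{|m}y^m)_{.k}y^i$ on the $y^i$-side. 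Collecting the $\delta^i_k$-coefficients gives $E=P^2-P_{|m}y^m$, and collecting the $y^i$-coefficients gives $3P_{|k}-PP_{.k}-(P_{|m}y^m)_{.k}$, which equals $3(P_{|k}-PP_{.k})+E_{.k}=\tau_k$ once one recognizes $E_{.k}=2PP_{.k}-(P_{|m}y^m)_{.k}$.

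The main obstacle is bookkeeping rather than anything conceptual: the substitution generates a large number of terms of the same homogeneity degree, and the delicate point is to pair every ordinary $x$-derivative of $P$ with exactly the right connection coefficient so that it organizes into $P_{|k}$ (and into the vertical derivative $E_{.k}$) without leaving an uncancelled residue. In particular, one must verify that the quadratic contribution $-PP_{.k}y^i$ and the linear contributions $3P_{|k}y^i-(P_{|m}y^m)_{.k}y^i$ combine with precisely the numerical factor $3$ in front of $(P_{|k}-PP_{.k})$; this normalization check, which relies on the homogeneity identity $P_{.m}y^m=P$ being applied consistently, is what pins down the exact form of $\tau_k$.
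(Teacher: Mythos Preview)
The paper does not supply a proof of this lemma: it is stated with a citation to \cite{Sh2} and left unproved, so there is no in-paper argument to compare your proposal against.

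That said, your approach is precisely the standard one from \cite{Sh2}: substitute $G^i=\bar G^i+Py^i$ into the explicit spray--curvature formula, separate the pure-$\bar G$ part (which rebuilds $\bar R{^i}_k$), and then sort the remaining $P$-dependent terms into a $\delta^i_k$-piece and a $y^i$-piece using Euler's relation $P_{.m}y^m=P$ together with the identity $P_{|k}=P_{;k}-\bar N^m_kP_{.m}$. Your computation of the quadratic contribution $P^2\delta^i_k-PP_{.k}y^i$ is correct, and the mechanism you describe for turning the linear cross-terms into $-(P_{|m}y^m)\delta^i_k$ and $3P_{|k}y^i-(P_{|m}y^m)_{.k}y^i$ is exactly how the computation closes; in particular, the non-$\delta^i_k$/non-$y^i$ fragments such as $2P\bar N^i_k$ and $-2\bar G^iP_{.k}$ do cancel pairwise, which is the point you rightly identify as the only delicate bookkeeping step.

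One small caution: with the convention $G^i=\bar G^i+Py^i$ from \eqref{ProjGeo} and with $P_{|k}$ taken relative to $\bar F$, your substitution yields $R{^i}_k=\bar R{^i}_k+E\delta^i_k+\tau_ky^i$, i.e.\ the identity \eqref{Rieproj} with the roles of $R$ and $\bar R$ exchanged. This is not a flaw in your argument but reflects an internal inconsistency in the paper's sign conventions (compare \eqref{ProjGeo} with \eqref{ProjG}); the two forms are equivalent upon swapping $(F,\bar F)$ and replacing $P$ by $-P$.
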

Finsler spaces play a crucial role in both mathematics and physics by providing a broader framework than Riemannian geometry, allowing for a more flexible definition of distances and curvature. A particular type of Finsler metric known as the (generalized) \((\alpha, \beta)\)-metrics has garnered significant interest in recent studies \cite{Sh2} and \cite{generalalpha-beta}. Among these, spherically symmetric Finsler metrics in \(\mathbb{R}^n\) constitute an important class that warrants recognition \cite{Sphericaly}.\\
Let $F$ be a Finsler metric on $\mathbb{B}^n(\upsilon) := \{x \in \mathbb{R}^n : |x| < \upsilon\}$. The Finsler metric $F$ is considered spherically symmetric if it satisfies the following condition
\[
F(x,y)=F(Ax,Ay),
\]
for all $x \in \mathbb{B}^n(\upsilon)$, $y \in T_x\mathbb{B}^n(\upsilon)$ and $A \in O(n)$, where \( O(n) \) denotes the orthogonal group. Let \( |\cdot| \) and \( \langle , \rangle \) represent the standard Euclidean norm and inner product on \( \mathbb{R}^n \), respectively. Huang and Mo demonstrated in \cite{Sphericaly} that a Finsler metric \( F \) on \( \mathbb{B}^n(\upsilon) \) exhibits spherical symmetry if and only if there exists a function \( \varphi: [0, \upsilon) \times \mathbb{R} \rightarrow \mathbb{R} \) such that it can be expressed as
\[
F= u \varphi (r, s),
\]
with $(x,y) \in T\mathbb{B}^n(v)\setminus \{0\}$, $u=\mid y\mid$, $r=\mid x\mid$, $v=<x,y>$ and $s=\frac{v}{u}$.

\section{Weakly-Weyl Finsler metrics}
In this section, we introduce a new projective invariant quantity in Finsler geometry, known as weakly-Weyl Finsler metrics. This quantity is a generalization of the well-known Weyl curvature, which is characterized by Finsler metrics of scalar flag curvature. Weakly-Weyl Finsler metrics are defined as Finsler metrics for which the curvature $\tilde{W}$, referred to as the weakly-Weyl curvature, vanishes. The weakly-Weyl curvature is defined as follows.
\begin{Def}\label{DefW-Weyl}
For a Finsler space $(M,F)$ with the Weyl curvature $W=\{W_y\}_{y \in T_xM\setminus{0}}$, the weakly-Weyl curvature is defined as,
\be\label{W-Weyl}
\tilde{W}_y: T_xM \times T_xM \times T_xM \longrightarrow T_xM\\
\ee
\[
\tilde{W}_y (u, v, w)= \tilde{W}_j{^i}_{kl}u^j v^k w^l \frac{\partial} {\partial x^i},
\]
where $\tilde{W}_j{^i}_{kl}=W_j{^i}_{pl.k}y^p$ and $W_j{^i}_{pl}=\frac{1}{3}(W{^i}_{p.l}-W{^i}_{l.p})_{.j}$.
\end{Def}
Based on the definition provided above, we can conclude the following proposition
\begin{prop}
A Finsler metric has a vanishing weakly-Weyl curvature vanishes if and only if it is of $W$-quadratic.
\end{prop}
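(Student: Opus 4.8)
The plan is to unwind both conditions in terms of the Weyl curvature tensor $W_j{}^i{}_{kl}$ and show they are literally the same vanishing condition. Recall that a Finsler metric is called $W$-quadratic when its Weyl curvature $W^i{}_k$ is quadratic in $y$, equivalently when the third $y$-derivative $W^i{}_{k.j.l.p}$ vanishes, equivalently when $W_j{}^i{}_{kl} = \tfrac13(W^i{}_{k.l}-W^i{}_{l.k})_{.j}$ — which is built from two $y$-derivatives of the degree-two-homogeneous object $W^i{}_k$ — is itself homogeneous of degree zero, i.e. independent of $y$ along rays. So $W$-quadratic means $W_j{}^i{}_{kl}$ has vanishing vertical derivative in the Euler direction: $W_j{}^i{}_{pl.k}y^p = 0$ after symmetrization, which by the homogeneity of $W_j{}^i{}_{kl}$ (degree $-2$ as a function of $y$, since $W^i{}_k$ is degree $2$ and we took two $\partial_y$'s... one must track this carefully) reduces precisely to the contraction $W_j{}^i{}_{pl.k}y^p$.

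First I would record the homogeneity degrees: $W^i{}_k$ is positively homogeneous of degree $2$ in $y$, hence $W^i{}_{k.l}$ is degree $1$, hence $W_j{}^i{}_{kl}=\tfrac13(W^i{}_{k.l}-W^i{}_{l.k})_{.j}$ is degree $0$. By Euler's theorem, a smooth function on $T_xM\setminus\{0\}$ that is homogeneous of degree $0$ satisfies $f_{.p}y^p = 0$; conversely, differentiating $W_j{}^i{}_{kl}$ once more and contracting, $W_j{}^i{}_{kl.p}y^p = 0$ is exactly the statement that $W_j{}^i{}_{kl}$ is homogeneous of degree $0$ — but it already is, automatically. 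This is the subtle point: the vanishing of $\tilde W_j{}^i{}_{kl}=W_j{}^i{}_{pl.k}y^p$ is \emph{not} automatic, because the index configuration $W_j{}^i{}_{pl.k}y^p$ contracts the $y^p$ against a \emph{different} slot ($p$) than the one the extra derivative ${}_{.k}$ acts on. So I would instead expand: since $W^i{}_{k.l.j}$ is the relevant second derivative, write $W_j{}^i{}_{pl}$ fully in terms of $W^i{}_{\bullet}$, take $\partial/\partial y^k$, contract with $y^p$, and use Euler's relation $W^i{}_{k.p}y^p = 2W^i{}_k$ together with its differentiated consequences $W^i{}_{k.l.p}y^p = W^i{}_{k.l}$ and $W^i{}_{k.l.j.p}y^p = 0$.

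Carrying this out: $\tilde W_j{}^i{}_{kl} = \tfrac13\big(W^i{}_{p.l}-W^i{}_{l.p}\big)_{.j.k}\,y^p$. Expanding $\partial_{y^j}\partial_{y^k}$ by Leibniz and contracting $y^p$ into each resulting term, the terms where $y^p$ hits a factor still carrying the index $p$ collapse via Euler, and one is left with a combination of $W^i{}_{k.l.j}$, $W^i{}_{l.k.j}$ and the triple-derivative term $W^i{}_{l.p.j.k}y^p$. A short computation shows this equals (a nonzero multiple of) the symmetrized triple $y$-derivative of $W^i{}_l$, i.e. $\tilde W_j{}^i{}_{kl}$ vanishes identically if and only if $W^i{}_l$ is a quadratic polynomial in $y$. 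The main obstacle — and really the only place care is required — is bookkeeping the Leibniz expansion and confirming that the "unwanted" lower-order terms (the ones linear and quadratic in $y$-derivatives of $W$) cancel by Euler's identities rather than merely by the $W$-quadratic hypothesis, so that the equivalence is genuinely an identity at the tensor level and not just an implication in one direction.
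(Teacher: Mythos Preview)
Your expansion plan is sound, but the identity it actually produces is
\[
\tilde W_j{}^i{}_{kl}=\tfrac{1}{3}\bigl(W^i{}_{p.l}-W^i{}_{l.p}\bigr)_{.j.k}y^p
=-\tfrac{1}{3}\bigl(W^i{}_{j.k.l}+W^i{}_{k.j.l}+W^i{}_{l.j.k}\bigr),
\]
the full symmetrization over $j,k,l$ of the \emph{second} $y$-derivative of $W^i{}_\bullet$ (the term $W^i{}_{l.p.j.k}y^p$ vanishes by Euler, and the remaining terms come from differentiating $W^i{}_p y^p=0$). It is \emph{not} a multiple of a third $y$-derivative of $W^i{}_l$. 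So the direction ``$\tilde W=0\Rightarrow W$-quadratic'' is not immediate from your formula: vanishing of this symmetric combination does not obviously force $W^i{}_{j.k.l.m}=0$. You would still need an extra step---for instance, differentiate the identity above in $y^m$, use it with permuted indices to see that $W^i{}_{j.k.l.m}$ is fully symmetric in all four lower indices, and then the original relation gives $3W^i{}_{j.k.l.m}=0$.

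The paper's argument is close in spirit but organizes this differently and avoids that symmetry juggling. It differentiates the hypothesis $W_j{}^i{}_{pl.k}y^p=0$ in $y^m$ and reuses the hypothesis to obtain $W_j{}^i{}_{ml.k}=W_j{}^i{}_{kl.m}$, which unwinds to $(W^i{}_{m.k}-W^i{}_{k.m})_{.l.j}=0$. Hence $(W^i{}_{m.k}-W^i{}_{k.m})_{.l}$ depends only on $x$; contracting with $y^l y^m$ and using $W^i{}_p y^p=0$ together with Euler then exhibits $W^i{}_k$ explicitly as a quadratic in $y$. Your Euler-based expansion and the paper's differentiate-then-contract trick are two sides of the same computation, but the paper's antisymmetric detour supplies exactly the step your sketch is missing.
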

\begin{proof}
Assume that the Finsler metric \( F \) has a vanishing weakly-Weyl curvature. This implies that \( W_j{^i}_{pl.k} y^p = 0 \). Differentiating this expression with respect to \( y^m \), we obtain
\[
0= (W_j{^i}_{pl.k}y^p)_{.m}= W_j{^i}_{ml.k} + W_j{^i}_{pl.k.m}y^p= W_j{^i}_{ml.k} +( W_j{^i}_{pl.m}y^p)_{.k}- W_j{^i}_{kl.m}.
\]
Since we have \( W_j{^i}_{pl.m} y^p = 0 \), the equation simplifies to
\[
W_j{^i}_{ml.k} - W_j{^i}_{kl.m}=(W{^i}_{ml.k} - W{^i}_{kl.m})_{.j}= \frac{1}{3} (W{^i}_{m.k} - W{^i}_{k.m})_{.l.j} =0.
\]
This indicates the existence of a tensor \( \Omega_l{^i}_{km}= \Omega_l{^i}_{km}(x) \), with the property $\Omega_l{^i}_{km}=-\Omega_l{^i}_{mk}$, such that
\[
\frac{1}{3}(W{^i}_{m.k} - W{^i}_{k.m})_{.l}=\Omega_l{^i}_{km}(x).
\]
By contracting the above equation with \( y^l \) and \( y^m \), we derive
\[
W{^i}_{k} = -\Omega_l{^i}_{km}(x) y^l y^m,
\]
which completes the proof of the proposition.
\end{proof}
Then we define
\begin{Def}\label{DefW-WeylMetric}
A Finsler metric $F$ is called a weakly-Weyl Finsler metric if its weakly-Weyl curvature satisfies the following equation,
\[
\tilde{W}_j{^i}_{kl}=\omega_{jkl} y^i,
\]
where \( \omega_{jkl}=\omega_{jkl}(x,y) \) represents the tensor coefficients, and it holds that
\be\label{DefomegaW-Weyl}
\omega_{jkl|0} + \mu F\omega_{jkl}=0,
\ee
for a scalar function \( \mu \) defined on the tangent bundle \( TM \).
\end{Def}
Given that Weyl curvature is a projective invariant quantity in Finsler spaces, and recognizing the concept of weakly-Weyl curvature, it is simple to establish the subsequent Theorem.
\subsection{\textbf{Proof of Theorem \ref{W-WeylProjInv}}}
\begin{proof}
Consider the Finsler metrics $F$ and $\bar{F}$, which are projectively related through their geodesic coefficients, denoted by $G^{i}$ and $\bar{G}^{i}$, respectively. We have
\be\label{ProjG}
\bar{G}^{i}=G^{i}+P y^{i},
\ee
with projective factor $P$. After differentiating with regards to $y^{j}$, we will have
\begin{equation}\label{ProjGij}
\bar{G}^{i} \cdot{ }_{j}=G^{i}{ }_{. j}+P \delta^{i}{ }_{j}+P_{. j} y^{i},
\end{equation}
The Weyl curvature of both metrics, \( F \) and \( \bar{F} \), is identical. According to Definition \ref{DefW-WeylMetric}, it can be expressed as
\begin{equation}\label{Weylomega}
\bar{W}_j{^i}_{ml.k}y^m = W_j{^i}_{ml.k}y^m = \omega_{jkl} y^i,
\end{equation}
with the condition that \( \omega_{jkl|0} =- \mu F \omega_{jkl} \) for some scalar function \( \mu \) defined on the tangent bundle \( TM \). We will now demonstrate that
\[
\omega_{jkl||0} +  \bar{\mu} \bar{F} \omega_{jkl}=0,
\]
for a scalar function \( \bar{\mu} \) on \( TM \). Here, the symbol \( || \) denotes the horizontal derivative with respect to the metric \( \bar{F} \). Based on equations \eqref{ProjG} and \eqref{ProjGij}, we obtain
\[
\omega_{jkl||0} = \omega_{jkl;0} - 2 (G^{r}+P y^{r}) \omega_{jkl.r} - (G^{r}_{. j}+P \delta^{r}_{j}+P_{. j} y^{r}) \omega_{rkl} - (G^{r}_{. k}+P \delta^{r}_{k}+P_{. k} y^{r}) \omega_{jrl}
\]
\[
- (G^{r}_{. l}+P \delta^{r}_{l}+P_{. l} y^{r}) \omega_{jkr}.
\]
According to equation \eqref{Weylomega}, we find that \( \omega_{jkl.r} y^r = -\omega_{jkl} \) and \( \omega_{jkl} y^l = \omega_{jkl} y^k = \omega_{jkl} y^j = 0 \). Thus, from the above equation, we get
\[
\omega_{jkl||0} = \omega_{jkl|0}-P\omega_{jkl}.
\]
According to assumption, $F$ is a weakly-Weyl metric, then there is a scalar function $\mu$ on $TM$ such that $\omega_{jkl|0} + \mu F \omega_{jkl}=0$. Then we conclude
\[
\omega_{jkl||0} = - (\mu F + P) \omega_{jkl} = - \bar{\mu} \bar{F} \omega_{jkl}.
\]
This completes the proof.
\end{proof}
According to the Definition \ref{DefW-WeylMetric}, it is straightforward to deduce the following proposition.
\begin{prop}
Weyl Finsler metrics and $W$-quadratic Finsler metrics are weakly-Weyl metrics.
\end{prop}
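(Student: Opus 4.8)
The plan is to verify directly that both classes fall under Definition \ref{DefW-WeylMetric} with the trivial choice $\omega_{jkl}\equiv 0$, so that the structure equation \eqref{DefomegaW-Weyl} holds vacuously. No curvature computation is needed beyond unwinding the definitions.

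First I would treat the Weyl case. If $F$ is a Weyl metric, then by definition its projective Weyl curvature vanishes, $W=0$. Consequently every tensor obtained from $W$ by vertical differentiation and contraction vanishes as well; in particular $W_j{^i}_{pl}=\frac{1}{3}(W{^i}_{p.l}-W{^i}_{l.p})_{.j}=0$, and therefore the weakly-Weyl curvature $\tilde{W}_j{^i}_{kl}=W_j{^i}_{pl.k}y^p$ is identically zero. Hence the equation $\tilde{W}_j{^i}_{kl}=\omega_{jkl}y^i$ of Definition \ref{DefW-WeylMetric} holds with $\omega_{jkl}\equiv 0$, and \eqref{DefomegaW-Weyl}, namely $\omega_{jkl|0}+\mu F\omega_{jkl}=0$, is satisfied (for instance with $\mu=0$). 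Thus $F$ is weakly-Weyl.

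Next I would treat the $W$-quadratic case. By the Proposition above characterizing $W$-quadratic metrics, a Finsler metric is of $W$-quadratic type if and only if its weakly-Weyl curvature vanishes; hence $\tilde{W}_j{^i}_{kl}=0$, and exactly as before the choice $\omega_{jkl}\equiv 0$, $\mu=0$ fulfills all the requirements of Definition \ref{DefW-WeylMetric}. Therefore $F$ is weakly-Weyl.

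There is no genuine obstacle here: the whole content is the observation that in both situations the weakly-Weyl curvature is identically zero, which makes the transport equation for $\omega_{jkl}$ trivial. The only remark worth recording is that the first case is in fact absorbed into the second, since $W=0$ forces $\tilde{W}=0$, which by the cited Proposition means $F$ is $W$-quadratic; so Weyl metrics constitute a subclass of $W$-quadratic metrics, and a fortiori of weakly-Weyl metrics.
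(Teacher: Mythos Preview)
Your argument is correct and matches the paper's intended reasoning: the paper does not even provide a formal proof environment for this proposition, merely stating that it is ``straightforward to deduce'' from Definition~\ref{DefW-WeylMetric}, which is exactly what you have spelled out by taking $\omega_{jkl}\equiv 0$ (and invoking the preceding $W$-quadratic characterization). Your closing observation that Weyl metrics are already a subclass of $W$-quadratic metrics is also implicit in the paper's setup.
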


The class of weakly-Weyl metrics includes significant types of projective invariant Finsler metrics, such as Weyl and $W$-quadratic Finsler metrics.
In \cite{W-WeylnotWeyl}, the authors introduce some spherically symmetric Finsler metrics that are not Weyl metrics. One can see that they are (non-trivial) weakly-Weyl metrics and also Douglas metrics.
\begin{ex}\label{W-WeylnotWeyl}\cite{W-WeylnotWeyl}
Spherically symmetric Finsler metrics in $\mathbb{B}^{n}(\upsilon) \subseteq \mathbb{R}^n$, has been introduced as $F=|y| \varphi(|x|,\frac{<x,y>}{|y|})$ with $\varphi:[0, \upsilon) \times \mathbb{R} \rightarrow \mathbb{R}$ where $(x,y) \in T \mathbb{B}^{n}(\upsilon)\setminus \{0\}$ and
\[
\varphi(r, s)= s. h(r)-\frac{s}{(a+br^2)^{\lambda}}\int_{s_0}^s \sigma^{-2}f\Big(\frac{r^2-\sigma^2}{(a+br^2)^{\lambda}}\Big)d\sigma,
\]
where $a$, $b$ and $\lambda$ are constants satisfying $a+br^2 >0$. These metrics are not Weyl metrics but are weakly-Weyl and also Douglas metrics. Their Weyl curvature is as follows
\[
W{^i}_{l}=\frac{4\lambda(\lambda-1)b^2}{(a+br^2 )^2}\big( x_j x^i\delta_{kl} +\frac{1}{n-1}x_k x_l \delta{^i}_j +\frac{|x|^2}{n-1}\delta{^i}_k\delta_{jl}-\delta_{jl}x_k x^i
\]
\[
- \frac{|x|^2}{n-1}\delta{^i}_j\delta_{kl}-\frac{1}{n-1} x_j x_k\delta{^i}_{l} \big)y^k y^l.
\]
\end{ex}
Then one could concludes
\begin{prop}
The class of Weyl Finsler metrics is a proper subset of the class of weakly-Weyl Finsler metrics.
\end{prop}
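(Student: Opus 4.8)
The plan is to prove the two halves of the statement separately: first the containment, then its strictness.

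For containment, I would appeal to the proposition established immediately above, which asserts that every Weyl Finsler metric is weakly-Weyl. In detail: if $F$ is a Weyl metric then $W{^i}_k \equiv 0$, so $W_j{^i}_{pl} = \tfrac{1}{3}(W{^i}_{p.l} - W{^i}_{l.p})_{.j} = 0$, and hence the weakly-Weyl curvature $\tilde{W}_j{^i}_{kl} = W_j{^i}_{pl.k}y^p$ vanishes identically. Taking $\omega_{jkl} \equiv 0$ in Definition \ref{DefW-WeylMetric}, both the relation $\tilde{W}_j{^i}_{kl} = \omega_{jkl}y^i$ and the transport condition \eqref{DefomegaW-Weyl} (with, say, $\mu = 0$) hold trivially, so $F$ lies in the weakly-Weyl class.

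For strictness, I would exhibit a weakly-Weyl metric that is not Weyl, and the separating family is already available: the spherically symmetric Finsler metrics of Example \ref{W-WeylnotWeyl}. For these, the Weyl curvature $W{^i}_l$ equals the explicit tensor displayed there, whose coefficient $4\lambda(\lambda-1)b^2/(a+br^2)^2$ is nonzero once one chooses constants with $b \neq 0$, $\lambda \neq 0, 1$ and $a+br^2 > 0$; hence $W \neq 0$ and the metric is not Weyl. On the other hand, \cite{W-WeylnotWeyl} (together with the discussion preceding Example \ref{W-WeylnotWeyl}) shows that these metrics are weakly-Weyl — indeed Douglas — metrics. Combining the two parts yields the proper inclusion.

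The only point requiring genuine checking is that the Example metrics are truly \emph{nontrivial} weakly-Weyl metrics, i.e.\ that the $\omega_{jkl}$ coming from their nonzero Weyl curvature satisfies \eqref{DefomegaW-Weyl} for some scalar $\mu$ on $TM$. Because these metrics are Douglas, this can be extracted from Theorem \ref{WeylDouglasTheta}, which relates $W_j{^i}_{ml.k}y^m$ to $D_j{^i}_{kl|0}$ and the term $\theta_{jkl}y^i$: with $D = 0$ the weakly-Weyl curvature is purely of the form $\omega_{jkl}y^i$ with $\omega_{jkl} = -\tfrac{1}{n+1}\theta_{jkl}$, and the homogeneity and horizontal structure of $\theta_{jkl}$ for a spherically symmetric metric should yield the required first-order equation for $\omega_{jkl}$. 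Beyond this, I expect no obstacle: the containment is formal, and the witnessing example is essentially on the table already.
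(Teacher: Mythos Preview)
Your proposal is correct and follows exactly the paper's approach: the containment comes from the preceding proposition (Weyl $\Rightarrow$ weakly-Weyl), and strictness is witnessed by Example~\ref{W-WeylnotWeyl}.

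Your third paragraph, however, manufactures a difficulty that is not there. Look again at the displayed formula for $W{^i}_l$ in Example~\ref{W-WeylnotWeyl}: the coefficient $\frac{4\lambda(\lambda-1)b^2}{(a+br^2)^2}(\cdots)$ depends only on $x$, and the whole expression is quadratic in $y$. Hence the metric is $W$-quadratic, so by the proposition right after Definition~\ref{DefW-Weyl} the weakly-Weyl curvature $\tilde W_j{^i}_{kl}$ vanishes identically, $\omega_{jkl}\equiv 0$, and condition~\eqref{DefomegaW-Weyl} is vacuous. There is no need to route through Theorem~\ref{WeylDouglasTheta} or to analyse $\theta_{jkl}$; the ``non-trivial'' in the paper's phrasing refers only to $W\neq 0$, not to $\omega\neq 0$.
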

The theorem that will be proved in Sub-section \ref{Spherically} demonstrates that a symmetric Finsler metric in $\mathbb{R}^n$ is weakly-Weyl if and only if it is of $W$-quadratic. It is worthwhile to consider other particular classes of Finsler metrics, such as Randers metrics, more generally, $(\alpha, \beta)$-metrics or Generalized $(\alpha, \beta)$-metrics.\\
For the next step of our research, it is now interesting to explore the connections between this novel class of Finsler metrics and other projective invariants, such as Douglas and $GDW$ spaces. To initiate this exploration, we present the following theorem, which not only proves Sakaguchi's remarkable theorem—that all Weyl metrics (metrics with vanishing Weyl curvature) must be of GDW type \cite{Sakaguchi}—but also generalizes it.
\subsection{Proof of Theorem \ref{PreSakaguchi}}
\begin{proof}
The Weyl curvature of a Finsler metric $(M, F)$ is defined as \cite{Sh2}
\[
W{^i}_k= A{^i}_k -\frac{1}{n+1} A{^m}_{k.m} y^i,
\]
where $A{^i}_k=R{^i}_k-R \delta{^i}_{k}$ and $R=\frac{1}{n-1} R{^m}_m$. From this definition, we can express the Riemann curvature tensor as follows.
\[
R{^i}_k= W{^i}_k+ R \delta{^i}_k + \frac{1}{n+1} A{^m}_{k.m} y^i.
\]
Substituting this expression into \eqref{Rikl}, we obtain
\be\label{RjiklWeyl}
\begin{aligned}
3R_j{^i}_{ml}=(W{^i}_{m.l}-W{^i}_{l.m})_{.j}+ (\frac{1}{n+1}A{^s}_{m.s}-R_{.m})_{.j}\delta{^i}_l-(\frac{1}{n+1}A{^s}_{l.s}\\
-R_{.l})_{.j}\delta{^i}_m
+\frac{1}{n+1}(A{^s}_{m.l}-A^{s}_{l.m})_{.s}\delta{^i}_j+\frac{1}{n+1}(A{^s}_{m.l}-A{^s}_{l.m})_{.s.j}y^i.
\end{aligned}
\ee
However, per the definition of $A{^i}_k$, we can see that
\[
\frac{1}{n+1}A{^s}_{k.s}-R_{.k}=\frac{1}{n+1}(R{^s}_{k.s}-(n+2)R_{.k}),
\]
and
\[
A{^s}_{k.l}-A{^s}_{l.k}= 3 R{^s}_{kl} -(R_{.l} \delta{^s}_k-R_{.k} \delta{^s}_l).
\]
To compute $R_j{^i}_{ml.k}$, we first differentiate \eqref{RjiklWeyl} with respect to $y^k$. Then, by substituting the resulting equations into $R_j{^i}_{ml.k}$, we obtain the following expression.
\be\label{RjiklmWeyl}
\begin{aligned}
3R_j{^i}_{ml.k}=3W_j{^i}_{ml.k}+\frac{1}{n+1}\big[(R{^s}_{m.s}-(n+2)R_{.m})_{.j.k}\delta{^i}_l-(R{^s}_{l.s}\\ -(n+2)R_{.l})_{.j.k}\delta{^i}_m
+ 3R_s{^s}_{ml.k}\delta{^i}_j + 3R_s{^s}_{ml.j}\delta{^i}_k + 3R_s{^s}_{ml.j.k}y^i\big],
\end{aligned}
\ee
where $ W_j{^i}_{kl}=\frac{1}{3}(W{^i}_{k.l}-W{^i}_{l.k})_{.j} $. By utilizing the Ricci identity \eqref{RieBer}, we can express the relations as follows.
\be \label{RicciidRieE}
R_j{^i}_{ml.k} y^m=B_j{^i}_{kl|0}, \qquad R_s{^s}_{ml.k}= 2(E_{kl|m}-E_{km|l}).
\ee
Combining equations \eqref{Rikl} and \eqref{RieBer}, and taking into account the previous equation, we derive
\be \label{RicciidRieH}
\begin{aligned}
R_s{^s}_{ml.k}y^m= \frac{1}{3} R{^s}_{m.s.l.k}y^m = 2 H_{kl}.
\\
R_s{^s}_{ml.j.k}y^m=2H_{jl.k}-R_s{^s}_{kl.j}=2H_{jl.k}-2(E_{jl|k}-E_{jk|l})\\
=2E_{jl|p.k}y^p+ 2 E_{jl|k}-2(E_{jl|k}-E_{jk|l})=2(E_{jl.k|0}+E_{jk|l}).
\end{aligned}
\ee
Substituting equations \eqref{RicciidRieE} and \eqref{RicciidRieH} into the contracted form of \eqref{RjiklmWeyl} by $y^m$, we obtain
\be
\begin{aligned}\label{RjiklmWeylym}
W_j{^i}_{ml.k}y^m=R_j{^i}_{ml.k} y^m - \frac{1}{n+1}\big[2 H_{jk} \delta{^i}_l+ 2H_{kl}\delta{^i}_j + 2 H_{jk}\delta{^i}_k
\\
+2E_{jl.k|0}y^i+(2E_{jk|l}-\frac{1}{3}(R{^s}_{l.s}-(n+2)R_{.l})_{.j.k}) y^i\big]
\end{aligned}
\ee
Utilizing the above equation, \eqref{RicciidRieE} and \eqref{D2} in the equation \eqref{RjiklmWeylym}, one gets \eqref{WeylDotheta}.
\end{proof}
The direct outcome of the Lemma stated earlier is the following Theorem.
\subsection{\textbf{Proof of Theorem \ref{GSakaguchi}}}
\begin{proof}
Assume that $F$ is a weakly-Weyl metric. Then $W_j{^i}_{ml.k}y^m=\omega_{jkl}y^i$, which is consistent with the equation \eqref{DefomegaW-Weyl}. Using the equation \eqref{WeylDotheta} one obtains
\[
D_j{^i}_{kl|0} =(\omega_{jkl}+ \frac{1}{n+1} \theta_{jkl}) y^i.
\]
This implies that $F$ is a $GDW$-metric.
\end{proof}
When evaluating the equation represented by \eqref{WeylDotheta}, we can deduce that it is not always the case that every weakly-Weyl Finsler metric is a Douglas metric. The subsequent example serves as evidence for this assertion.
\begin{ex}\label{W-WeylnotDouglas} \cite{Osaka}
Put
\[
\Omega=\{(x,y,z) \in R^3 | x^2+y^2+z^2 <1\}, \quad p=(x,y,z) \in \Omega, \quad y=(u,v,w) \in T_p\Omega.
\]
Define the Randers metric $F=\alpha+\beta$ by
\[
\alpha=\frac{\sqrt{(-yu+xv)^2+(u^2+v^2+w^2)(1-x^2-y^2)}}{1-x^2-y^2}, \quad \beta=\frac{-yu+xv}{1-x^2-y^2}.
\]
The above Randers metric has vanishing flag curvature $K=0$ and S-curvature $S=0$. $F$ has zero Weyl curvature then $F$ is a weakly-Weyl metric. But $\beta$  is not closed then $F$ is not of Douglas type.
\end{ex}
Then, these two categories of Finsler metrics exhibit overlap in certain instances. The Example \ref{W-WeylnotWeyl}, shows that there are some Finsler metrics which are either Douglas and weakly-Weyl metrics. This metric belongs to the class of spherically symmetric Finsler metrics in $\mathbb{R}^n$. In the following, we consider these metrics in the case of weakly-Weyl Finsler metrics.
\subsection{Weakly-Weyl spherically symmetric Finsler metrics in $\mathbb{R}^n$}\label{Spherically}
Here, we delve into the intriguing class of weakly-Weyl spherically symmetric Finsler metrics in $\mathbb{R}^n$.  We begin by establishing a necessary and sufficient condition for a this special class of Finsler metrics to be weakly-Weyl. In particular, we prove a remarkable theorem that characterizes these metrics as being $W$-quadratic, a special class of Finsler metrics with strong projective invariance.\\
\subsection{\textbf{Proof of Theorem \ref{W-WeylSphThm}}}
\begin{proof}
Assume that $F$ is a spherically symmetric Finsler metric in $\mathbb{R}^n$ with the geodesic coefficients $G^i$ given by 
\[
G^i=uPy^i+ u^2Q x^i,
\]
where $P$ and $Q$ defined in \cite{SphGeodesic}. The Weyl curvature has beeb calculated in \cite{W-WeylnotWeyl}, as
\be\label{WeylofSpherically}
W{^i}_k=u^2 \omega_1 \delta{^i}_k+ (u^2 \omega_2 x_k+ u \omega_3 y_k) x^i+ (u \omega_4 x_k + \omega_5 y_k) y^i,
\ee
where
\[
\omega_1=-\frac{r^2-s^2}{n-1}R_2, \quad \omega_2=R_2,
\]
\[
\omega_3=-s R_2, \quad \omega_4= -\frac{s}{n-1}R_2-\frac{n-2}{n^2-1}(r^2-s^2)(R_2)_s,
\]
\[
\omega_5= \frac{r^2}{n-1}R_2+\frac{n-2}{n^2-1}s(r^2-s^2)(R_2)_s,
\]
and $R_2= 2Q(2Q-sQ_s)+\frac{1}{r}(2Q_{r}-sQ_{rs}-rQ_{ss})+(r^2-s^2)(2QQ_{ss}-Q_s^2)$. Note that
\be\label{WeylcoefSph}
\omega_5 + s \omega_4 = -\omega_1.
\ee
According to the relation between $\omega_2$ and $\omega_3$, we can rewrite \eqref{WeylofSpherically} as
\be\label{WeylofSpherically-Rewrite}
W{^i}_p=u^2 \omega_1 \delta{^i}_p+ u^3 \omega_2 s_{.p} x^i+ (u \omega_4 x_p + \omega_5 y_p) y^i.
\ee
Now, we aim to solve the equation $W_j{^i}_{pl.k}y^p=0$. First, we have
\[
3W{^i}_{pl}= W{^i}_{p.l}-W{^i}_{l.p}
\]
\[
=(uX_1 x_l + X_2 y_l)\delta{^i}_p- (uX_1 x_p + X_2 y_p)\delta{^i}_l+ 3\omega_2 (x_py_l-x_ly_p)x^i + \frac{1}{u}X_3(x_py_l-x_ly_p)y^i,
\]
where $X_1=(\omega_1)_s-\omega_4$, $X_2=2\omega_1-s(\omega_1)_s-\omega_5$ and $X_3=\omega_4-s(\omega_4)_s-(\omega_5)_s$. Then
\be\label{Wjipl}
3W_j{^i}_{pl}= A_{lj}\delta{^i}_p- A_{pj}\delta{^i}_l + E_{pl}\delta{^i}_j + 3B_{plj}x^i + D_{plj}y^i,
\ee
where
\[
A_{lj}=(uX_1 x_l + X_2 y_l)_{.j}=A_{l.j},
\]
\[
B_{plj}=\omega_2 (x_p\delta_{jl}-x_l\delta_{jp})+(\omega_2)_s (x_py_l-x_ly_p) s_{.j}=B_{pl.j},
\]
\[
D_{plj}=\frac{1}{u}X_3(x_p\delta_{jl}-x_l\delta_{jp})+(\frac{(X_3)_s}{u} s_{.j}-\frac{X_3}{u^3}y_j) (x_py_l-x_ly_p)= E_{pl.j},
\]
where
\[
E_{pl}=\frac{1}{u}X_3(x_py_l-x_ly_p).
\]
Then according to the Definition \ref{DefW-WeylMetric}, we have
\[
3W_j{^i}_{pl.k}y^p = - A_{pj.k} y^p \delta{^i}_l + E_{pl.k} y^p \delta{^i}_j + D_{plj}y^p \delta{^i}_k+ 3B_{plj.k} y^p x^i +[A_{lj.k} + D_{plj.k} y^p] y^i=\omega_{jkl} y^i,
\]
with $\omega_{jkl|0}+ \mu F \omega_{jkl}=0$, for some scalar function $\mu$ on $TM$. It means that
\begin{align}
A_{pj.k} y^p=0,\label{eq1}\\
D_{plj}y^p=0, \label{eq2}\\
E_{pl.k} y^p=0,\label{eq3}\\
B_{plj.k} y^p=0, \label{eq4}\\
A_{lj.k} + D_{plj.k} y^p=\omega_{jkl},\label{eq5}
\end{align}
According to \eqref{eq3}, we get
\[
E_{pl.k} y^p = s X_3 \delta_{lk} - (X_3)_s x_l x_k + \frac{s}{u} (X_3)_s (x_l y_k + x_k y_l)-\frac{s}{u^2} (X_3+s(X_3)_s) y_k y_l=0,
\]
which implies that
\[
X_3=\omega_4-s(\omega_4)_s-(\omega_5)_s=0.
\]
It means that
\be\label{E=0}
E_{kl}=0.
\ee
Applying the equation $X_3=0$ to the formula for $D_{plj}$, we conclude that
\be\label{D=0}
D_{plj} = 0.
\ee
Combining this with \eqref{eq5}, we find that
\be \label{diffA=0}
A_{lj.k} = \omega_{jkl}.
\ee
On the other hands, based on \eqref{eq4}, we get
\[
B_{plj.k}y^p= u(\omega_2)_s\big[(s\delta_{jl}-\frac{1}{u}x_l y_j)s_{.k}+(s\delta_{lk}-\frac{1}{u}x_l y_k)s_{.j}- u^2 s_{.j.k} s_{.l}\big]-u^3(\omega_2)_{ss}s_{.l} s_{.j}s_{.k}=0.
\]
Then all coefficients and the coefficient of the term $\delta_{jl}x_k$ vanish. This implies that $(\omega_2)_s=0$ and noting the formula of $B_{klj}$ presented in \eqref{Wjipl}, we have
\[
B_{klj}= \omega_2(r) (x_k\delta_{jl}-x_l\delta_{jk}).
\]
It means that
\be\label{diffB=0}
B_{klj.m}=0.
\ee
Noting that \( \omega_2 = \omega_2(r) \) and considering the equations presented after \eqref{WeylofSpherically}, we find that
\[
\omega_1=-\frac{r^2-s^2}{n-1}\omega_2(r), \quad \omega_3=-s\omega_2(r), \quad \omega_4=-\frac{s}{n-1}\omega_2(r),
\]
\[
\omega_5=\frac{r^2}{n-1}\omega_2(r).
\]
Thus
\[
X_1=\frac{3s}{n-1}\omega_2(r), \quad X_2=-\frac{3r^2}{n-1}\omega_2(r).
\]
Then using the above equations, \eqref{diffB=0}, \eqref{D=0} and \eqref{E=0} in \eqref{Wjipl}, we get
\be\label{Wjiklsphw-quad}
W_j{^i}_{kl}= \omega_2(r)\Big( \frac{1}{n-1}(x_j x_l-r^2\delta_{jl})\delta{^i}_k-\frac{1}{n-1}(x_j x_k-r^2\delta_{jk})\delta{^i}_l+(x_k \delta_{jl}-x_l\delta_{jk})x^i\Big).
\ee
It concludes the proof.
\end{proof}
In this section, using this novel class, weakly-Weyl metrics, we introduce another new class of projective invariant Finsler metrics which is not necessarily a subset of the class of $GDW$-metrics while having an intersection with it.
\section{Generalized weakly-Weyl Finsler metrics}
In the next section, we introduce a novel class of projective invariant Finsler metrics, generalized weakly-Weyl Finsler metrics, that is constructed using the new class of weakly-Weyl metrics. This new class is not necessarily a subset of the class of $GDW$-metrics, but it does exhibit an intersection with that class. The defining characteristics of this new class set it apart from the existing categories of Finsler metrics, offering a unique perspective and potential applications. The approach used to construct this new class of Finsler metrics is similar to the approach employed in the paper \cite{SadeDouglas} for Douglas curvature. This new class of generalized weakly-Weyl Finsler metrics, along with the class of relatively isotropic $\tilde{D}$-metrics introduced in \cite{SadeDouglas}, will pave the way for the search of a novel projective invariant Finsler class that encompasses the class of $GDW$-metrics. Now, we present the definition for generalized weakly-Weyl Finsler metrics.
\begin{Def}
A Finsler metric $F$ is called generalized weakly-Weyl Finsler metric if its weakly-Weyl curvature satisfying the following equation.
\be\label{DefRIWW}
\tilde{W}_j{^i}_{kl|0}+ \mu F \tilde{W}_j{^i}_{kl} = \lambda_r \tilde{W}_j{^r}_{kl} y^i,
\ee
for some tensors $\lambda_r$ and smooth scalar function $\mu$ on $TM$. The weakly-Weyl curvature $\tilde{W}_j{^i}_{kl}$ have been introduced in Definition \ref{DefW-Weyl}.
\end{Def}
First of all, we show that the class of generalized weakly-Weyl ($GWW(M)$) Finsler metrics is closed under projective changes.  More precisely, if $F$ is projectively equivalent to $\bar{F} \in GWW(M)$ then $F\in GWW(M)$.
\subsection{\textbf{Proof of Theorem \ref{GW-WeylProjInv}}}
\begin{proof}
Assume that $F$ is projective equivalent to the other Finsler metric $\bar{F}$ with the same Weyl curvature $W=\{W_y\}_{y \in T_xM\setminus{0}}$.  According to \eqref{ProjGeo}, we have
\be\label{barGandG}
\bar{G}^i=G^i+ P y^i,
\ee
and then
\be\label{barNandN}
\bar{N}{^i}_j=N{^i}_j+ P\delta{^i}_j+ P_{.j} y^i.
\ee
If $\tilde{W}_j{^i}_{kl||0}$ denotes the horizontal covariant derivatives with respect to the Berwald connection of $\bar{F}$, then we have
\[
\tilde{W}_j{^i}_{kl||0}= \tilde{W}_j{^i}_{kl;m} y^m- 2 \tilde{G}^r \tilde{W}_j{^i}_{kl.r}- \tilde{W}_r{^i}_{kl}\bar{N}{^r}_j - \tilde{W}_j{^i}_{rl}\bar{N}{^r}_k
- \tilde{W}_j{^i}_{kr}\bar{N}{^r}_l + \tilde{W}_j{^r}_{kl}\bar{N}{^i}_r.
\]
Using Equations \eqref{barGandG} and \eqref{barNandN} in the previous expression, and noting that the weakly-Weyl curvature tensor $\tilde{W}_j{^i}_{kl}$ is homogeneous of degree zero in $y$ and $\tilde{W}{^i}_{kl}$ is homogeneous of degree 1 in $y$, we obtain
\[
\tilde{W}_j{^i}_{kl||0}= \tilde{W}_j{^i}_{kl|0}+ P_{.r} \tilde{W}_j{^r}_{kl} y^i -2P \tilde{W}_j{^i}_{kl}.
\]
Since $F$ is a generalized weakly-Weyl Finsler metric, there exist tensors $\lambda_r$, such that
\[
\tilde{W}_j{^i}_{kl|0}=-\mu F \tilde{W}_j{^i}_{kl} + \lambda_r  \tilde{W}_j{^r}_{kl} y^i.
\]
Substituting this into the previous equation, we obtain
\[
\tilde{W}_j{^i}_{kl||0}= -(\mu F  + 2 P) \tilde{W}_j{^i}_{kl} + (\lambda_r+ P_{.r}) \tilde{W}_j{^r}_{kl} y^i= -\bar{\mu}\bar{F}\tilde{W}_j{^i}_{kl}+\bar{\lambda}_r \tilde{W}_j{^r}_{kl} y^i,
\]
which means that $\bar{F}$ is a generalized weakly-Weyl Finsler metric.
\end{proof}
The above Proposition demonstrates that this new class of Finsler metrics is projective invariant. To gain a deeper understanding of this new class, it is essential to explore its connections with other projective invariant quantities in Finsler spaces.
\begin{prop}
For every generalized weakly-Weyl Finsler metric, we have
\be\label{GWWandDouglas}
D_j{^i}_{kl|0|0} + \mu F D_j{^i}_{kl\mid 0} = T_{jkl}y^i,
\ee
for some tensors $T_{jkl}$ and smooth scalar function $\mu$ on $TM$.
\end{prop}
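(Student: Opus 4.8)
The plan is to read off the proposition as a direct consequence of Theorem~\ref{PreSakaguchi} combined with the defining equation \eqref{DefRIWW} of a generalized weakly-Weyl metric. By Definition~\ref{DefW-Weyl} and \eqref{WeylDotheta}, the weakly-Weyl curvature can be written as
\[
\tilde{W}_j{^i}_{kl}=W_j{^i}_{ml.k}y^m=D_j{^i}_{kl|0}-\frac{1}{n+1}\theta_{jkl}\,y^i,
\]
where $\theta_{jkl}=2E_{jk|l}-\frac{1}{3}(R{^s}_{l.s}-(n+2)R_{.l})_{.j.k}$ is a genuine tensor. The idea is to differentiate this identity horizontally, contract by $y^m$, and then use \eqref{DefRIWW} to eliminate the term $\tilde{W}_j{^i}_{kl|0}$ in favour of Douglas-curvature data.

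First I would apply the operator ${}_{|0}$ to the displayed identity. Since $y^i{}_{|m}=0$ for the Berwald connection, the last term contributes a derivative only through $\theta_{jkl}$, so that
\[
\tilde{W}_j{^i}_{kl|0}=D_j{^i}_{kl|0|0}-\frac{1}{n+1}\theta_{jkl|0}\,y^i.
\]
Next, \eqref{DefRIWW} gives $\tilde{W}_j{^i}_{kl|0}=-\mu F\,\tilde{W}_j{^i}_{kl}+\lambda_r\tilde{W}_j{^r}_{kl}y^i$ for the tensor $\lambda_r$ and the scalar $\mu$ on $TM$ furnished by the hypothesis. I would substitute this on the left of the previous display, then replace the remaining $\tilde{W}_j{^i}_{kl}$ appearing in $-\mu F\,\tilde{W}_j{^i}_{kl}$ once more by $D_j{^i}_{kl|0}-\frac{1}{n+1}\theta_{jkl}y^i$, and collect all terms proportional to $y^i$. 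This yields
\[
D_j{^i}_{kl|0|0}+\mu F\,D_j{^i}_{kl|0}=\Big(\tfrac{1}{n+1}\theta_{jkl|0}+\tfrac{\mu F}{n+1}\theta_{jkl}+\lambda_r\tilde{W}_j{^r}_{kl}\Big)y^i,
\]
which is exactly \eqref{GWWandDouglas} with the same $\mu$ and with $T_{jkl}:=\frac{1}{n+1}\big(\theta_{jkl|0}+\mu F\,\theta_{jkl}\big)+\lambda_r\tilde{W}_j{^r}_{kl}$, a $(0,3)$-tensor of the required type since $\lambda_r\tilde{W}_j{^r}_{kl}$ is a contraction of tensors.

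The computation is essentially routine; the only points that need care are tensoriality and homogeneity bookkeeping — one must check that $\tilde{W}_j{^i}_{kl}$ and $\theta_{jkl}$ are honest tensors on $TM_0$, so that horizontal differentiation is well defined and commutes with the purely algebraic substitutions, and that $y^i{}_{|m}=0$ is invoked correctly when differentiating $\theta_{jkl}y^i$. I do not anticipate a genuine obstacle: once \eqref{DefRIWW} is plugged into the identity of Theorem~\ref{PreSakaguchi}, the proposition follows in a couple of lines, which is presumably why it is recorded as a direct consequence.
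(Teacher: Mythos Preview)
Your proof is correct and follows essentially the same route as the paper: both substitute the identity \eqref{WeylDotheta} of Theorem~\ref{PreSakaguchi} into the defining relation \eqref{DefRIWW} and collect the $y^i$-terms. Your $T_{jkl}=\frac{1}{n+1}(\theta_{jkl|0}+\mu F\,\theta_{jkl})+\lambda_r\tilde{W}_j{^r}_{kl}$ agrees with the paper's $T_{jkl}=\lambda_r D_j{^r}_{kl|0}+\frac{1}{n+1}\big(\theta_{jkl|0}+(\mu F-\lambda_0)\theta_{jkl}\big)$ once one expands $\lambda_r\tilde{W}_j{^r}_{kl}=\lambda_r D_j{^r}_{kl|0}-\frac{1}{n+1}\lambda_0\theta_{jkl}$.
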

\begin{proof}
Assume that $F$ is a generalized weakly-Weyl Finsler metric. Then for some tensors $\lambda_r$ and smooth scalar function $\mu$, we have
\[
\tilde{W}_j{^i}_{kl|0}+ \mu F \tilde{W}_j{^i}_{kl} = \lambda_r  \tilde{W}_j{^r}_{kl} y^i.
\]
Using Theorem \ref{WeylDouglasTheta} with noting the above equation, we obtain
\[
D_j{^i}_{kl|0|0}+ \mu F D_{j}{^i}_{kl\mid 0}-  \lambda_r D_j{^r}_{kl|0} y^i=\frac{1}{n+1} \big(\theta_{jkl|0} +(\mu F- \lambda_0) \theta_{jkl}\big) y^i.
\]
By putting $T_{jkl}=\lambda_r D_j{^r}_{kl|0}+\frac{1}{n+1} \big(\theta_{jkl|0} +(\mu F- \lambda_0) \theta_{jkl}\big)$, one finds \eqref{GWWandDouglas}. $\theta_{jkl}$ has been defined in the Theorem \ref{WeylDouglasTheta}.
\end{proof}

\subsection{Concluding Remark}

By the equation \eqref{WeylDotheta} in Theorem \ref{WeylDouglasTheta}, we have
\[
\tilde{W}_j{^i}_{kl\mid 0}+ \mu F \tilde{W}_j{^i}_{kl} -  \lambda_r \tilde{W}_j{^r}_{kl} y^i
\]
\[
=D_j{^i}_{kl|0|0}+ \mu F D_{j}{^i}_{kl\mid 0}-  \lambda_r D_j{^r}_{kl|0} y^i-\frac{1}{n+1} \big(\theta_{jkl|0} +(\mu F- \lambda_0) \theta_{jkl}\big) y^i
\]
The above equation suggests that there may exist some generalized weakly-Weyl Finsler metrics that are not $GDW$-metrics. It also shows that there may exist a $GDW$-metric $F$ with $D_j{^i}_{kl|0}=d_{jkl}y^i$, that is not a generalized weakly-Weyl Finsler metric. Based on the equation \eqref{WeylDotheta}, for this metric we have
\[
\tilde{W}_j{^i}_{kl|0}+ \mu F \tilde{W}_j{^i}_{kl}- \lambda_r \tilde{W}_j{^r}_{kl}y^i= \big(\tilde{d}_{jkl\mid 0}+(\mu F- \lambda_0)\tilde{d}_{jkl}\big)y^i,
\]
where $\tilde{d}_{jkl}=d_{jkl}-\frac{1}{n+1}\theta_{jkl}$ and $\lambda_0=\lambda_r y^r$. The formula for $\theta_{jkl}$ is presented in the Theorem \ref{WeylDouglasTheta}.\\
The class of generalized weakly-Weyl Finsler metrics is a projective invariant class that has non-empty intersections with the class of $GDW$-metrics, such as the class of weakly-Weyl metrics. However, generalized weakly-Weyl metrics are not necessarily a subset of or contained within the class of $GDW$-metrics.
\section*{Declarations}
\subsection*{\textbf{Competing Interests}}
\quad The authors declare that they have no competing interests related to the work submitted for publication.
\subsection*{\textbf{Funding}}
\quad The authors acknowledge that there were no financial supports received for this study.
\subsection*{\textbf{Ethical Approval}}
\quad This research did not involve human participants or animals, and therefore ethical approval was not required.
\subsection*{\textbf{Informed Consent}}
\quad As this study did not involve human participants, informed consent was not applicable.

\

\end{document}